\theoremstyle{plain}
\newtheorem{teor}{Theorem}
\newtheorem{prop}{Proposition}
\newtheorem{lemma}{Lemma}
\newtheorem{cor}{Corollary}
\theoremstyle{remark}
\newtheorem{remark}{Remark}
\newtheorem{example}{Example}
\title{Complete sets and completion of sets in Banach spaces}
\author{HORST MARTINI}
\author{PIER LUIGI PAPINI}
\author{MARGARITA SPIROVA}
\begin{document}
\maketitle {}

\textbf{ABSTRACT}
In this paper we study properties of complete sets and of completions of sets in Banach spaces. We consider the family of completions of a given set and its size; we also study in detail the relationships concerning diameters, radii, and centers. The results are illustrated by several examples.

\medskip
\textbf{Keywords:} Banach space, centers, complete set, (unique) completion, diameter, diametrically maximal set, norm, normed space, radius, self-radius

\textbf{MSC (2000):} 46B20, 46B99, 52A05, 52A20, 52A21

\medskip

\section {Introduction}

The notion of diametrically maximal, or complete, set is around one century old. A set is \emph{complete} if all its proper supersets have a larger diameter.
A few decades ago the study of these sets, initially limited to finite dimensional spaces, was extended to Banach spaces of any dimension; one of the pioneering papers concerning this extension was \cite{BaP}. (Note that this paper is not so easily accessible and contains a few misprints.) New interest in these sets arose in the last few years. Among the recent papers on the subject are  \cite{MPP, Pv, Mor, MS, CP'}; see also the references contained in these papers.
More precisely, many recent papers deal with  questions like the following ones: Given a closed, bounded and convex set  $D$, which is the class of its completions? (Note that complete sets containing $D$ and having the same diameter as $D$ are called completions of $D$.) When does the class of complete sets contain it? When does   such a class consist of  a singleton? And in addition, are there good, or special completions?

Here we deal with these questions in a general  Banach space (which might be infinite dimensional). We single out and study some simple properties, trying to give a complete picture concerning the different situations which are possible.

\smallskip
In Sections 2 and 3, the sizes of completions for a given set $D$ are studied, with respect to the diameter and the radius of $D$.  Finally, in Section 4 we study some minimality conditions for sets concerning completions.

\bigskip

Let $X$ be a real Banach space (in finite dimensions also called normed or Minkowski space). We denote by $O$ the origin.  For $x \in X$ and $r \geq 0$,
$B(x,r)=\{y \in X: ||x-y|| \leq r\}$ denotes the \emph{ball} with center $x$ and radius $r$.

\smallskip
Let $D$ be a bounded, closed and convex set containing at least two points. In the following we shall always consider sets satisfying this condition. By $\delta(D)$ we will denote the \emph{diameter} of $D$, and  by $\partial(D)$ its \emph{boundary}.

\smallskip
We say that $D$ is $complete$ or $diametrically$ $maximal$  - (DM) for short - if \,$\delta(D \cup \{x\})>\delta(D)$ for every $x\notin D$.

A $completion$ of  $D$ is a (DM) set $D_m$ containing $D$ and such that $\delta(D)=\delta(D_m)$.

\smallskip
We shall also use the following notations (see \cite{BaP}):

\smallskip
$D'= \bigcap _{x\in D} \{B(x, \delta(D))\}$, called the \emph{ball intersection of} $D$, and

\smallskip
$D^c= \bigcap _{x\in X} \{B(x, \delta(D)): D \subset B(x, \delta(D))\}$, called the \emph{ball hull of} $D$.

\medskip
We have always $D \subset D^c \subset D'$. Moreover (see \cite[Propositions 3.1 and 3.2]{BaP}), we have  $\delta(D^c)=\delta(D); \;  (D^c)'=D'; \; D$  is complete $\Leftrightarrow D=D'$. Also we have $(D^c)^c=D^c$ (a misprint occurs in \cite [ Proposition 3.2] {BaP}).

\smallskip
We recall (see, for example,  \cite [p. 311] {Bav}) the known property

\smallskip
\noindent  ($P_1)  \quad  D^c= \bigcap _{x\in D'} \{B(x, \delta(D))\}$.

\medskip
We always have (see, for example, \cite [Proposition 2]{Mor}; or also \cite[Theorem 3] {PS}):

\smallskip
\noindent $(1) \quad D^c= \bigcap \, \{A: \;A \; \mathrm{is \;a \:completion \;of} \; D\};$

\smallskip
\noindent $(2) \quad D'= \bigcup \, \{A: \;A \; \mathrm{is \;a \;completion \;of} \; D \}$.

\smallskip

Thus $D$ has a unique completion if and only if its completion is $D^c=D'$; this is equivalent to the following equalities (see \cite[Theorem 3.7] {BaP} or  \cite [Corollary 3] {Mor}):

\smallskip
\noindent $(2') \quad  D^c$ is $(DM) \Longleftrightarrow  D'$ is $(DM) \Longleftrightarrow \delta(D)=\delta(D').$

\smallskip
We recall that for normed planes and spaces these sets have been considered in many papers: from the not so recent paper \cite {Bav}, to the recent ones \cite {Pv}, \cite{MSpi}, \cite{MRS}, and \cite{Mor}, whose results partly overlap with some results in \cite{BaP}.

\bigskip

\section {On the completions of a set and their sizes}

\medskip
For $x \in X$ set

\smallskip
$r(D,x)= \sup\{||x-d||:  d \in D\}$;

\smallskip
$r(D)= \inf\{r(D,x): x\in X\}$ \; ($radius$ of $D$);

\smallskip
$r(D,D)= \inf\{r(D,x): x\in D\}$   \; ($self$-$radius$ of $D$).

\smallskip
A point $c \in X$ such that $r(D,c)=r(D)$ is called a $center$ of D.
Note that  not always a center exists, but in finite dimensional case its existence is guaranteed. We have  always  $r(D) \leq r(D,D) \leq \delta(D) \leq 2 r(D)$, and  also, $r(D,D)=r(D)$
if $D$ is complete (see \cite [Theorem 3.3]{BaP}).

\medskip
Clearly, we always have

\smallskip
\noindent $(3) \quad \delta(D') \leq 2\,  r(D') \leq   2 \,
 r(D', D') \leq 2\, \inf \{r(D',x): x\in D\}
 \leq 2 \, \delta(D) \leq 4 \, r(D).$

\smallskip
 Note that    $ r(D', D') < \, \inf \{r(D',x): x\in D\}$ in Example 5 (in Section 3).    The following examples show that also (3) is sharp, since the equality $\delta(D') = 4 \, r(D)$ is possible.

\smallskip
\begin{example}
 In fact, our first example consists of three examples; the third one will be used further on.

\smallskip
A) Consider in $R^2$, with the max norm, the set $D=\{(x,0): \, 0 \leq x \leq 1 \}$.
We have:  $D=D^c$ and $ D'= \{(x,0): \, 0 \leq x \leq 1, \,  |y| \leq 1 \}$, and  so
$\delta(D') = 2$ and $r(D)=1/2$.

\smallskip
B) Consider the space $R^3$ with the sum (or  $\ell_1$) norm. Let $D$ be the convex hull of  $\{(1,1,0); \; (1,0,1); \; (0,1,1)\}$. We have: $\delta(D)=2$ and $r(D)=1  \;((1,1,1)$ is the unique center of $D$). Both points (0,0,0) and $(\frac{4}{3}, \frac{4}{3}, \frac{4}{3})$ belong to $D'$, and their distance is $4$. Also, according to (2'), $D$ has different completions.

\smallskip
C) $D$ is the set indicated in A), but in the space $R^2$ with the Euclidean norm. We have:

$D^c=B\big( (1/2, -\sqrt 3/2), 1\big) \cap B\big( (1/2, \sqrt3/2), 1 \big); \;  D'=B (O, 1 ) \cap B \big( (1,0), 1 \big)$.

\end{example}

%\bigskip
For $d \in D$, set

\smallskip
$r'(D,d)= inf\{||x-d||:  x \notin D\}$;

\smallskip
$r'(D)= sup\{r'(D,x): x\in D\}$ \; ($inner$ $radius$ of $D$).

\smallskip
Note that if $D$ has empty interior, then $r'(D)=0$.

\medskip

Given $D, \; x\in D'$ means\,  $||x-d|| \leq \delta(D)$ for all \, $d \in D$. So we have that

\smallskip
\noindent $(P_2)$ \quad    if $x \in D'$, then $r(D,x) \leq \delta(D)$ \, (and conversely).

\medskip
 If \, $D$ has interior points and \, $B(d_o, \alpha) \subset D \;\, (d_0 \in D; \, \alpha>0)$, then $||x-d_o|| \leq \delta(D)-\alpha$ for all $x \in D'$;  so  $||x-y|| \leq  ||x-d_0||+ ||d_0-y|| \leq 2(\delta(D)-\alpha)$  for $x, y \in D'$. Therefore (3) can be improved to

\medskip
\noindent $(3') \quad \delta(D') \leq 2 \, \delta(D)-2 \, r'(D). $

\medskip
\noindent Moreover (with the same notations as above),
$x \in D'\smallsetminus D$ implies   \, $d'_o=d_o+\alpha \frac{x-d_o}{||x-d_o||} \in D$ and
$||x-d'_o||\leq r(D,x)-2 \alpha$. Thus,  denoting by  $H(A,B)$ the Hausdorff distance between the sets $A$ and $B$, we have

\medskip
\noindent $(3'')\quad H(D,D') \leq \delta(D)-2r'(D) \leq 2(r(D)-r'(D)) $.

\medskip
The inequality $(3')$ is sharp; see Example 1 A). Or also, use as $D$ the set denoted by $D'$ in the same example  (for which  $ r'=1/2$ and $\delta =2$). The same holds  for the inequalities in  $(3'')$. Moreover, no better estimate is possible if we consider $D^c$ instead of $D'$: we give an example where $D$ has nonempty interior, its completion is unique and equalities  hold in $(3'')$.

\begin{example}
Consider in  $R^2$, with the max norm, the set
$D=\{(x,y): 0 \leq x \leq 1, \, 0 \leq y \leq x\}$. We have:
$\delta(D)=1$, $r(D)=1/2$, $r'(D)=1/4$, $D'=D^c=\{(x,y): 0 \leq x \leq 1$,
$0 \leq y \leq 1\}$ (this is the unique completion of $D$), and $ H(D,D^c)=H(D,D')=1/2$.

 \end{example}

\smallskip
The inequality $H(D,D^c) \leq r(D)$ is not true in general (see Example 5 in Section 3), so $r(D)$ is not so useful in this context.  Now we shall consider $r(D,D)$.

\medskip
The following was proved in \cite [Theorem 3.5] {BaP}:

\smallskip
\noindent ($P_3$) \;  Let $D\subset B(x,r)$ (for some  $x \in X$ and $r \in R$). Then  $D$ has a completion contained in $B(x,r)$

\quad \; if \, $r \leq \delta(D)  \, $ (so $D^c \subset B(x, r) )$.

\smallskip
Indeed,  the last bound for $r$ was not indicated in  \cite {BaP}, but the proof given there only works for  $r \leq \delta(D)$. Example 4  A) below shows that such bound is crucial.

\smallskip
\begin{prop}
We always have

$H(D, D^c) \leq r(D,D).$
\end{prop}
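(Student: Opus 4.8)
The plan is to exploit property $(P_3)$ together with the relationship $(1)$, namely $D^c = \bigcap\{A : A \text{ is a completion of } D\}$. The Hausdorff distance $H(D,D^c)$ is governed by how far a point of $D^c$ can lie from $D$, since $D \subset D^c$ forces $H(D,D^c) = \sup_{x \in D^c} \operatorname{dist}(x,D)$. So it suffices to show that every $x \in D^c$ satisfies $\operatorname{dist}(x,D) \leq r(D,D)$, and then take the supremum.

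First I would fix an arbitrary $\varepsilon > 0$ and choose a point $d_0 \in D$ with $r(D,d_0) \leq r(D,D) + \varepsilon$; such $d_0$ exists by the definition of the self-radius as an infimum over $D$. Then $D \subset B(d_0, r(D,d_0))$. The key case distinction is whether $r(D,d_0) \leq \delta(D)$ or not. If $r(D,d_0) \leq \delta(D)$, property $(P_3)$ applies directly with $x = d_0$ and $r = r(D,d_0)$, giving $D^c \subset B(d_0, r(D,d_0))$; hence every $x \in D^c$ satisfies $\|x - d_0\| \leq r(D,d_0) \leq r(D,D)+\varepsilon$, so $\operatorname{dist}(x,D) \leq \|x-d_0\| \leq r(D,D)+\varepsilon$. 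Since $d_0 \in D$, this estimate is exactly of the type needed. The other case is vacuous or easy: we always have $r(D,D) \le \delta(D)$ from the chain $r(D) \leq r(D,D) \leq \delta(D) \leq 2r(D)$ recorded in Section 2, so in fact $r(D,d_0) \leq r(D,D) + \varepsilon \leq \delta(D) + \varepsilon$; letting $\varepsilon$ be small we may as well assume we are always in (a limiting form of) the first case, but to be safe one argues directly: if $r(D,d_0) > \delta(D)$ we still have $r(D,d_0) \le \delta(D) + \varepsilon$ is false in general, so instead one simply notes that $d_0$ can be chosen so that $r(D,d_0) \le \delta(D)$ whenever $r(D,D) \le \delta(D)$, which always holds. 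Thus $(P_3)$ is always applicable.

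Putting this together: for every $\varepsilon>0$ and every $x \in D^c$ we get $\operatorname{dist}(x,D) \leq r(D,D)+\varepsilon$; letting $\varepsilon \to 0$ and taking the supremum over $x \in D^c$ yields $H(D,D^c) = \sup_{x\in D^c}\operatorname{dist}(x,D) \leq r(D,D)$, which is the claim.

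The main obstacle I anticipate is the boundary case in applying $(P_3)$: the remark after $(P_3)$ stresses that the bound $r \leq \delta(D)$ is crucial (Example 4~A)), so one must be careful that the chosen $d_0$ yields $r(D,d_0) \le \delta(D)$, not merely $r(D,d_0)$ close to $r(D,D)$. This is handled by the inequality $r(D,D) \le \delta(D)$: since the infimum defining $r(D,D)$ is $\le \delta(D)$, for each $\varepsilon>0$ there is $d_0\in D$ with $r(D,d_0) < \delta(D) + \varepsilon$, and a small additional argument (or a direct appeal to the fact that one can always find $d_0$ with $r(D,d_0)$ arbitrarily close to $r(D,D) \le \delta(D)$, hence $\le \delta(D)$ for suitable choice) ensures $(P_3)$ applies. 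A secondary, purely routine point is the identity $H(A,B) = \sup_{a \in A}\operatorname{dist}(a,B)$ when $A \subset B$, which makes the reduction to a pointwise distance estimate legitimate.
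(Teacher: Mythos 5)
Your argument is correct and is essentially the paper's proof: both hinge on choosing $d_0\in D$ with $r(D,d_0)\le r(D,D)+\varepsilon$, invoking $(P_3)$ to get $D^c$ inside a ball centered at $d_0$, and letting $\varepsilon\to 0$, together with the (routine) reduction $H(D,D^c)=\sup_{x\in D^c}\mathrm{dist}(x,D)$ coming from $D\subset D^c$. The only place your write-up wobbles is the discussion of whether $(P_3)$ is applicable: that paragraph is tangled, and the inference ``$r(D,d_0)$ is arbitrarily close to $r(D,D)\le\delta(D)$, hence $\le\delta(D)$ for a suitable choice'' is not valid as stated. The worry is in fact vacuous for a reason you circle around but never state: for \emph{every} $d_0\in D$ one has $r(D,d_0)=\sup\{\|d_0-y\|:y\in D\}\le\delta(D)$ directly from the definition of the diameter, so $(P_3)$ applies with $r=r(D,d_0)$ and no case distinction is needed. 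With that one-line remark inserted, your version is even marginally cleaner than the paper's proof, which applies $(P_3)$ with radius $r(D,D)+\varepsilon$ and is therefore forced to require $\varepsilon\le\delta(D)-r(D,D)$ and to dispose of the boundary case $r(D,D)=\delta(D)$ separately by appealing to $(3'')$.
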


\begin{proof}
If $r(D,D)=\delta(D)$,  the result is contained in $(3'')$.

Otherwise, let $\varepsilon \in \big(0,\,  \delta(D) - r(D, D) \big];$ take $d \in D$ such that $D \subset B \big(d, \,  r(D,D)+ \varepsilon \big) \subset B \big(d, \delta(D) \big)$. Then, according to $(P_3)$,  also $D^c \subset  B \big(d, r(D,D)+ \varepsilon \big) $. Thus $x \in D^c \implies ||x-d|| \leq r(D,D)+ \varepsilon$ with $d \in D$, and then $H(D, D^c) \leq r(D,D)+ \varepsilon$. Since $\varepsilon >0$ is arbitrary, we obtain the thesis.
\end{proof}
\smallskip
Note that the inequality $H(D, D') \leq r(D,D)$ is not true in general. Our next example shows this.

\smallskip
\begin {example}
Let, for $t \in [0, 1/2],  \, D_t=\{f \in C[-1,1]: 0 \leq f(x) \leq t(x+1) $ for $ -1 \leq x \leq 0;  \; 0 \leq f(x) \leq 1 $ for
 $ 0 \leq x \leq 1 \}$.
Among the completions, there are sets of functions satisfying, for $-1 \leq x \leq 0$,
 $f(x) \in [\alpha-1, \alpha$], $t \leq \alpha \leq 1$; or also   $t(x+1)-1 \leq f(x) \leq t(x+1)$.   We have: $r(D_t)=1/2$, $\delta(D_t)=1$, $r(D_t,D_t)=1-t$, $\delta(D'_t)=2$, and $H(D_t, D'_t)=1.$

\end {example}

\smallskip
The difference $\delta(D') - \delta(D) \;\, \big(\leq \delta(D)\big)$ measures, in a sense, how  different the completions of $D$ can be. We have

\smallskip
\begin{prop}

Let

\smallskip
$H(D)=sup \, \{H(D_1,D_2): D_1, D_2$ are completions of $D$\}.

\smallskip
Then \, $\delta(D') - \delta(D) \leq H(D)$.

\smallskip
Moreover, equality holds  if any complete set C satisfies the following,  slightly stronger condition (usually called "$constant$ $diameter$", see \cite[Section 3]{MPP}):

\smallskip
\noindent (CD) \quad  $r(C,x)=\delta(C)+ \mathrm{dist}(x,C)$  \,  for every $x \notin C$, where $\mathrm{dist}$ denotes the usual distance with respect to the norm under consideration.

\end{prop}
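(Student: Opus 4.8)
The plan is to prove the inequality $\delta(D')-\delta(D)\le H(D)$ by exhibiting, for any prescribed $\varepsilon>0$, two completions of $D$ whose Hausdorff distance is at least $\delta(D')-\delta(D)-\varepsilon$. First I would use formula $(2)$, namely $D'=\bigcup\{A:A$ is a completion of $D\}$, together with compactness-type reasoning: pick a point $u\in D'$ realizing (up to $\varepsilon$) the diameter of $D'$, say $\|u-v\|\ge \delta(D')-\varepsilon$ for some $v\in D'$, and use $(2)$ to find completions $A_1\ni u$ and $A_2\ni v$. Now every completion contains $D^c$, hence contains $D$, and has diameter exactly $\delta(D)$; so for any $d\in D\subset A_2$ we get $\|u-d\|\le\delta(D')$ while also $u$ lies at distance $\operatorname{dist}(u,A_2)$ from $A_2$. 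The point is to bound $\operatorname{dist}(u,A_2)$ from below in terms of $\|u-v\|$: since $v\in A_2$ and $A_2$ has diameter $\delta(D)$, any point of $A_2$ is within $\delta(D)$ of $v$, so $\operatorname{dist}(u,A_2)\ge\|u-v\|-\delta(D)\ge\delta(D')-\delta(D)-\varepsilon$. Letting $\varepsilon\to 0$ gives $H(D)\ge\delta(D')-\delta(D)$. (One should double-check the degenerate case $\delta(D')=\delta(D)$, where the inequality is trivial, and the case where a center or diametral pair in $D'$ fails to be attained, which is exactly why the $\varepsilon$-approximation is used rather than exact extremal points.)

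For the equality statement under hypothesis (CD), I would argue that (CD) forces each completion to be "as spread out as possible" in a way that makes the bound tight. Suppose every complete set satisfies $r(C,x)=\delta(C)+\operatorname{dist}(x,C)$ for $x\notin C$. Take again $u,v\in D'$ with $\|u-v\|$ close to $\delta(D')$, and a completion $A_1$ containing $u$. The goal is to produce a second completion $A_2$ with $\operatorname{dist}(u,A_2)$ close to $\delta(D')-\delta(D)$ — ideally by choosing $A_2$ to be a completion of $D$ that stays far from $u$. Here is where (CD) enters: for the completion $A_1$, and for any point $z\notin A_1$, we have $r(A_1,z)=\delta(D)+\operatorname{dist}(z,A_1)$; applying this with $z=v$ shows that the farthest point of $A_1$ from $v$ is at distance exactly $\delta(D)+\operatorname{dist}(v,A_1)$. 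Combined with the fact that $v\in D'$ means $v$ is within $\delta(D)$ of every point of $D$, and $D^c\subset A_1$, one extracts that $\operatorname{dist}(v,A_1)\ge\delta(D')-\delta(D)-\varepsilon$ (roughly: $v$ is far from $u\in A_1$, hence far from $A_1$). Symmetrically building $A_2$ around $v$ and measuring back to $u$, the (CD) identity upgrades the one-sided estimate of the first part into a genuine lower bound $H(A_1,A_2)\ge\delta(D')-\delta(D)-\varepsilon$ on the Hausdorff distance in both directions, which together with the already-established upper bound yields $H(D)=\delta(D')-\delta(D)$.

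The main obstacle I expect is the second half: turning the inequality $\operatorname{dist}(u,A_2)\ge\|u-v\|-\delta(D)$ into an equality requires knowing that $A_2$ can be chosen so that $v$ is genuinely "central" relative to $A_2$ in the direction of $u$ — i.e., that the completion process does not accidentally pull part of $A_2$ closer to $u$ than the diameter bound permits. This is precisely the content of (CD): it guarantees that the completion $A_2$ has constant width in every direction, so that opposite "extreme" points of $A_2$ are exactly $\delta(D)$ apart and the distance from $u$ to $A_2$ is controlled tightly from above as well as below. I would need to verify carefully that a completion of $D$ containing a prescribed boundary point $v\in D'$ can be selected (this follows from $(2)$), and that for such a completion the (CD) identity applied at $u$ gives $\operatorname{dist}(u,A_2)=r(A_2,u)-\delta(D)$; combining with $r(A_2,u)\ge\|u-v\|$ (as $v\in A_2$) closes the gap. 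A secondary technical point is the absence of centers in infinite dimensions, handled throughout by $\varepsilon$-arguments as in Proposition 1.
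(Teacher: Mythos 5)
Your first half is correct and is essentially the paper's own argument: take an (approximate) diametral pair $u,v$ of $D'$, use (2) to produce completions $A_1\ni u$, $A_2\ni v$, and the triangle inequality (every point of $A_2$ lies within $\delta(A_2)=\delta(D)$ of $v$) gives $\mathrm{dist}(u,A_2)\geq \|u-v\|-\delta(D)$, hence $H(D)\geq \delta(D')-\delta(D)$ after letting $\varepsilon\to 0$.

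The second half has a genuine gap: you are arguing toward the wrong inequality. What the first part establishes is a lower bound on $H(D)$ (equivalently an upper bound on $\delta(D')-\delta(D)$); for equality you must prove $H(D)\leq \delta(D')-\delta(D)$, i.e.\ that \emph{every} pair of completions $D_1,D_2$ satisfies $H(D_1,D_2)\leq \delta(D')-\delta(D)$. Your argument instead constructs two particular completions and derives lower bounds on their Hausdorff distance ``in both directions''; the phrase ``together with the already-established upper bound'' appeals to an upper bound on $H(D)$ that was never established. Moreover, your use of (CD) there does no work: $\mathrm{dist}(v,A_1)\geq\delta(D')-\delta(D)-\varepsilon$ already follows from the triangle inequality, since $u\in A_1$ and $\delta(A_1)=\delta(D)$. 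The way (CD) must be used -- and the way the paper uses it -- points in the opposite direction: for arbitrary completions $D_1,D_2$ and any $x\in D_2\smallsetminus D_1$, (CD) applied to the complete set $D_1$ gives $\mathrm{dist}(x,D_1)=r(D_1,x)-\delta(D)$, and since $x\in D'$ and $D_1\subset D'$ one has $r(D_1,x)\leq\delta(D')$, so $\mathrm{dist}(x,D_1)\leq\delta(D')-\delta(D)$; taking suprema over $x$, over both directions, and over all pairs yields $H(D)\leq\delta(D')-\delta(D)$. (The paper states this contrapositively: a near-maximizing $x$ for $H(D)=H(D_1,D_2)$ produces, via (CD), a point $\bar y\in D_1$ with $\|x-\bar y\|$ nearly $\delta(D)+H(D)$, and $x,\bar y\in D'$ forces $\delta(D')\geq\delta(D)+H(D)$.) You actually have the right ingredients in your last paragraph -- the identity $\mathrm{dist}(u,A_2)=r(A_2,u)-\delta(D)$ and the fact that everything lives in $D'$ -- but you pair the identity with the lower bound $r(A_2,u)\geq\|u-v\|$ instead of the needed upper bound $r(A_2,u)\leq\delta(D')$, and you apply it only to specially chosen completions rather than to all pairs entering the supremum defining $H(D)$; as written, the equality is not proved.
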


\begin{proof}
We give the proof assuming that the values involved are attained; otherwise they can be arbitrarily well approximated,  and the proof can be easily adapted (so the result is still true).
Let $\delta(D') = \delta(D) + \alpha, \; \alpha >0$ (if $\alpha=0$, then $D'$ is the unique completion of $D$,   and there is nothing to prove). Take $x, y \in D'$ such that $||x-y||=\delta(D')$; according to (2) there are two completions of $D$, say $D_1$ and $ D_2$, such that $x \in D_1, \, y \in D_2$. Since  $r(D_1,x) \leq \delta(D)$  (see ($P_2$)), to reach $y$ from $D_1$ we have to enlarge $D_1$ at least by $\alpha$. So the conclusion for the first part holds.

To prove the second part, let  $H(D)=H(D_1,D_2)   \; (D_1 $ and $D_2$ again  being completions of $D$). Assume,  for example  that for every $\varepsilon>0$, there exists $x \in D_2 \subset D'$ such that $||x-y||> H(D)-\varepsilon$ for every $y \in D_1$  (otherwise, the role of $D_1$ and $D_2$ should be exchanged). Then (by using (CD))  $r(D_1,x) \geq \delta(D_1)+ H(D) - \varepsilon$; \, so $\big(\delta(D_1)=\delta(D) $\big) there exists $ \bar{y}\in D_1 \subset D'$ such that $||x- \bar{y}||>\delta(D) + H(D) - 2  \varepsilon$. This  implies $\delta(D') \geq \delta(D)+ H(D)$.
\end{proof}

\smallskip
\begin{remark}
We do not know if the assumption given  for the second part of the previous proposition is necessary.

\smallskip
We always have   $H(D) \leq \sup \, \{H(D_1,D): D_1$ is a completion of $D$\}. (Note that by the above Example 1 C), strict inequality is possible.)

\smallskip
\noindent In fact, let $\varepsilon>0$. According to Proposition 2 there are  $D_1, D_2$ such that \, $\sup \,  \{\mathrm{dist} (x, D_2): x \in D_1\} > H(D)- \varepsilon$. Then  \, $\sup \, \{\mathrm{dist} (x, D):  x \in D_1\} >H(D) - \varepsilon$, so the conclusion.

\smallskip
Also, we  have that    \,  $\inf \,\{H(D_1,D): D_1$ is a completion of $D\} \geq H(D, D^c)$.

\noindent In general this is not an equality (see the same example quoted above).

\end{remark}

\medskip

\begin{teor} Let $D_1$ and $D_2$ be different complete sets. We have:

\smallskip
(a) if  there is no inclusion between $D_1$ and $D_2$, then $D_1 \cup D_2$ is not complete;

\smallskip
(b) if  \, $\delta(D_1)=\delta(D_2)=\delta (D_1 \cap D_2)$, then $D_1 \cap D_2$ is not complete.
\end{teor}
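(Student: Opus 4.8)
For part (a), the plan is to argue by contradiction using the characterization $D$ complete $\Leftrightarrow D = D'$ together with the union formula $(2)$, or more directly via the diameter of the union. Suppose $D_1 \cup D_2$ is complete with neither $D_1 \subset D_2$ nor $D_2 \subset D_1$. Pick $x \in D_1 \setminus D_2$ and $y \in D_2 \setminus D_1$. Since $D_1$ is complete and $y \notin D_1$, we have $\delta(D_1 \cup \{y\}) > \delta(D_1)$, and similarly $\delta(D_2 \cup \{x\}) > \delta(D_2)$. The key point is to show $\delta(D_1 \cup D_2) = \max\{\delta(D_1), \delta(D_2)\}$, which is what forces non-completeness: if this holds, then since the right-hand side equals $\delta(D_i)$ for one of the two indices, say $i=1$, and $D_1 \subsetneq D_1 \cup D_2$, the set $D_1 \cup D_2$ fails to be complete (adding the point $y$ did not increase the diameter beyond $\delta(D_1\cup D_2)$). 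To get $\delta(D_1 \cup D_2) = \max\{\delta(D_1),\delta(D_2)\}$ I would use the (CD)-type estimate or property $(P_2)$: any $x \in D_1$ lies in $D_1' = D_1$, hence $\|x - y\| \le \delta(D_1)$ only when $y \in D_1$, which is not generally available — so instead I expect the cleaner route is: both $D_1, D_2$ are completions of $D_1 \cap D_2$ is false in general (the diameters may differ), so I should reduce to the case $\delta(D_1) = \delta(D_2) =: \delta$ first, observing that if the diameters differ, say $\delta(D_1) < \delta(D_2)$, then $D_1 \cup D_2$ has diameter $\ge \delta(D_2) > \delta(D_1)$, and completeness of $D_1\cup D_2$ would be violated by noting $D_2$ is already complete of that diameter and properly contained, giving $\delta((D_1\cup D_2)\cup\{w\}) = \delta(D_1\cup D_2)$ for suitable $w \notin D_1 \cup D_2$ — this needs care. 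The honest approach: if $D_1 \cup D_2$ were complete, then $D_1 \cup D_2 \subseteq (D_1 \cup D_2)' \subseteq D_1' = D_1$ would require $D_2 \subseteq D_1$, using that $(D_1\cup D_2)' = (D_1)' \cap (D_2)'$ when $\delta(D_1) = \delta(D_2)$, and monotonicity of ball intersections; the diameter-mismatch case is handled separately as above.

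For part (b), set $\delta := \delta(D_1) = \delta(D_2) = \delta(D_1 \cap D_2)$ and $D := D_1 \cap D_2$. The first observation is that $D_1$ and $D_2$ are both completions of $D$: they are complete, contain $D$, and have the same diameter $\delta = \delta(D)$. Since $D_1 \neq D_2$, the set $D$ has at least two distinct completions, so by $(2')$ its completion is not unique, which means $D = D_1 \cap D_2$ is not itself complete — indeed, if $D$ were complete it would equal $D'$ and be its own unique completion, contradicting the existence of two distinct completions $D_1, D_2 \supsetneq D$. The main step is just to verify that $\delta(D) = \delta$ is exactly the hypothesis given, and that $D_1 \neq D_2$ forces at least one of $D_1, D_2$ to strictly contain $D$ (in fact both do, since if $D_1 = D$ then $D_2 \supseteq D_1 \cap D_2 = D = D_1$, and a complete set $D_2 \supsetneq D_1$ of the same diameter would contradict completeness of $D_1$; so $D_1 = D_2$).

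The main obstacle I anticipate is in part (a): controlling $\delta(D_1 \cup D_2)$ without extra structure. The formula $D' = D^c$-type manipulations give $(D_1 \cup D_2)' = D_1' \cap D_2'$ cleanly (a ball contains $D_1 \cup D_2$ iff it contains each), and combined with $D_i$ complete $\Leftrightarrow D_i = D_i'$ this yields $(D_1\cup D_2)' = D_1 \cap D_2$ when $\delta(D_1)=\delta(D_2)$; but then for $D_1 \cup D_2$ to be complete one needs $D_1 \cup D_2 = (D_1\cup D_2)' = D_1 \cap D_2$, i.e. $D_1 \cup D_2 \subseteq D_1 \cap D_2$, forcing $D_1 = D_2$ — contradiction. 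When the diameters differ, $\delta(D_1 \cup D_2) > \min\{\delta(D_1), \delta(D_2)\}$ makes $(D_1\cup D_2)'$ computed with respect to the larger diameter, and one checks the smaller set cannot be complete of that diameter; I would spell this out using $(P_2)$ and the elementary bound $\delta(D_1 \cup D_2) \le \delta(D_1) + \delta(D_2) - $ (overlap), but the slick way is to note that completeness of $D_1 \cup D_2$ would force $D_1 \cup D_2 = (D_1 \cup D_2)'$, and then intersecting with balls of radius $\delta(D_1) < \delta(D_1\cup D_2)$ centered at points of $D_1$ shows $D_1 = D_1' \supseteq (D_1\cup D_2)' \cap(\text{those balls}) \supseteq D_2 \cap (\text{those balls})$, again yielding an inclusion after a short argument. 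I expect part (b) to be essentially immediate from $(2')$ once the "both completions of $D_1\cap D_2$" observation is in place, so the real work is the diameter-bookkeeping in (a).
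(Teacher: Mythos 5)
Your part (b) is correct and amounts to the paper's own observation (a complete set admits no strictly larger set of the same diameter, so it is its own unique completion; since $D_1\neq D_2$ are both completions of $D_1\cap D_2$, that intersection cannot be complete). The problem is part (a), where your central step fails. The identity you rely on, $(D_1\cup D_2)'=D_1'\cap D_2'$, only makes sense if all three ball intersections are taken with the \emph{same} radius; by definition $(D_1\cup D_2)'$ is an intersection of balls of radius $\delta(D_1\cup D_2)$, while $D_i'$ uses radius $\delta(D_i)$. Under the hypotheses of (a) these radii never agree: pick $x\in D_1\setminus D_2$; completeness of $D_2$ gives $\delta(D_2\cup\{x\})>\delta(D_2)$, hence $\delta(D_1\cup D_2)>\delta(D_2)$, and symmetrically $\delta(D_1\cup D_2)>\delta(D_1)$. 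So the ``equal diameter'' scenario on which your computation $(D_1\cup D_2)'=D_1\cap D_2$ rests is vacuous, and the target you set in your first paragraph, $\delta(D_1\cup D_2)=\max\{\delta(D_1),\delta(D_2)\}$, is not merely unproved but impossible. (Concretely: for two overlapping unit-diameter Euclidean disks, $(D_1\cup D_2)'$ is an intersection of balls of the strictly larger radius $\delta(D_1\cup D_2)$ and is far bigger than the lens $D_1\cap D_2$.) What remains in your sketch is exactly the case you defer --- the union has strictly larger diameter than both $D_i$ --- and there your argument ends with ``yielding an inclusion after a short argument,'' which is never supplied; knowing $\delta(D_1\cup D_2)>\delta(D_2)$ does not by itself contradict completeness of the union, so the real work is missing.

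For comparison, the paper's proof starts precisely from the observation $\delta(D_1\cup D_2)=\delta(D_2)+\varepsilon$ with $\varepsilon>0$ (the fact that defeats your reduction) and then argues: if $D_1\cap D_2\neq\emptyset$, there is a point $z\in\partial(D_1\cup D_2)$ lying in $D_1\cap D_2$, for which $\sup\{\|z-y\|:y\in D_1\cup D_2\}\leq\delta(D_2)$ since $z$ belongs to both complete sets; but in a complete set every boundary point is an endpoint of a diameter, so completeness of the union would force this supremum to equal $\delta(D_2)+\varepsilon$, a contradiction. If $D_1\cap D_2=\emptyset$, the union is not convex and hence not complete. To repair your write-up you would need an argument of this kind (or some other device exploiting a point common to $D_1$ and $D_2$, whose distance to every point of the union is at most $\delta(D_2)$), rather than the primed-set identity, which cannot be salvaged because of the radius mismatch.
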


\begin{proof} (a): Let $\delta(D_1)=d_1 \leq d_2 = \delta(D_2).$  We cannot have $\delta (D_1 \cup D_2) \leq d_2$, since $ (D_1 \cup D_2)$ strictly contains $D_i, \, i=1,2$. Let $\delta (D_1 \cup D_2)=d_2+\varepsilon, \; \varepsilon>0.$

\medskip

Case 1:    Let $ D_1 \cap D_2 \neq \emptyset$.

Assume that  $D_1 \cup D_2$ is complete.  There is a point $ z$ which is at the same time in $ \partial ( D_1 \cup D_2)$ and in
 $ \partial ( D_1 \cap D_2)$. This implies  \,  $\sup\{ ||z-y||: y \in  D_1 \cup D_2\} \leq d_2$; but also (boundary points are endpoints of diameters in complete sets) sup$\{ ||z-y||: y \in  D_1 \cup D_2\} = d_2 + \varepsilon $. This contradiction shows that
  $D_1 \cup D_2 $  is not complete.

\medskip
Case 2:   $D_1 \cap D_2= \emptyset$.

\smallskip
In this case   $ D_1 \cup D_2 $ is not convex. In fact, let  $x \in D_1, \,  y \in D_2$.  The sets $[x, y] \cap D_1$ and  $[x, y] \cap D_2$ are convex (and disjoint). Let    $[x, y] \cap D_1=[x, x'], \,  [x, y] \cap D_2=[y', y]$  ($x' \neq y')$.  So $(x', y')$ is not contained in $ D_1 \cup D_2$.

\bigskip
(b): It is enough to observe that  $D_1 \cap D_2$ is strictly contained in each  of the two sets $D_i, \,  i=1,2$.
 \end{proof}

\begin{cor} If $D$ has two different completions $D_1$ and $D_2$, then neither $D_1 \cup D_2$ nor $D_1 \cap D_2$ can be complete.
 \end{cor}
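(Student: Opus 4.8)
The plan is to deduce the corollary directly from Theorem~2 by checking that two distinct completions of the same set $D$ satisfy the hypotheses of both parts of that theorem. First I would recall that by definition any completion of $D$ is a complete set with diameter exactly $\delta(D)$; hence if $D_1$ and $D_2$ are two \emph{different} completions, then $\delta(D_1)=\delta(D_2)=\delta(D)$, and in particular the two sets have equal diameter.

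For the union: I would argue that there can be no inclusion between $D_1$ and $D_2$. Indeed, if, say, $D_1\subseteq D_2$, then since both are complete and $\delta(D_1)=\delta(D_2)$, the completeness of $D_1$ forces $D_1=D_2$ (a complete set has no proper superset of the same diameter), contradicting $D_1\neq D_2$. So neither contains the other, and Theorem~2(a) applies to give that $D_1\cup D_2$ is not complete.

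For the intersection: I would use Theorem~2(b), which requires $\delta(D_1)=\delta(D_2)=\delta(D_1\cap D_2)$. We already have $\delta(D_1)=\delta(D_2)=\delta(D)$; it remains to see $\delta(D_1\cap D_2)=\delta(D)$. The inclusion $D\subseteq D_1\cap D_2$ holds because each $D_i$ contains $D$, so $\delta(D_1\cap D_2)\geq\delta(D)$; the reverse inequality is immediate from $D_1\cap D_2\subseteq D_1$ and $\delta(D_1)=\delta(D)$. Hence the hypothesis of Theorem~2(b) is met, and $D_1\cap D_2$ is not complete.

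The only point requiring a little care — and the place where I expect the main (minor) obstacle — is justifying that distinct completions are never nested, i.e. the observation that a complete set admits no proper superset of equal diameter; this is exactly the definition of (DM), reread contrapositively, so no real difficulty arises, but it is the one step that is not a pure quotation of an earlier numbered fact.
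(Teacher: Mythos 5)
Your proof is correct and follows essentially the same route as the paper: both parts are deduced from the preceding theorem, using $D\subseteq D_1\cap D_2$ to get $\delta(D_1\cap D_2)=\delta(D)$ for part (b). The only difference is that you spell out explicitly why distinct completions cannot be nested (a complete set has no proper superset of equal diameter), a step the paper leaves implicit when invoking part (a).
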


\begin{proof} The first part follows from part (a) of the previous theorem. For the second part, note that \, $D_1 \cap D_2$  contains $D$ (so $\delta(D_1 \cap D_2)= \delta(D)$), and  then apply part (b) of the previous theorem.
\end{proof}

\bigskip

To prove our next result, we need the following lemma.

\begin{lemma} Let $D$ be a set, and let  $\alpha, \varepsilon >0$. Set

\smallskip
$A= \bigcap_ {x\in D}  B(x, \alpha);  \;    A_ {\varepsilon} =  \bigcap_ {x\in D}  B(x, \alpha + \varepsilon)$.

\medskip
If $A \neq \emptyset$, then $ \delta( A_ {\varepsilon} ) \geq \delta(A)+2 \varepsilon$.

\end{lemma}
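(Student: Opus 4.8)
The idea is to take an almost‑diametral pair of points of $A$ and to push each of them a further distance $\varepsilon$ \emph{away} from the other, along the line through both; the triangle inequality will keep the two new points inside $A_\varepsilon$, while the distance between them grows by exactly $2\varepsilon$. First note that since $\varepsilon>0$ we have $B(x,\alpha)\subset B(x,\alpha+\varepsilon)$ for every $x$, hence $A\subset A_\varepsilon$; in particular $A_\varepsilon\neq\emptyset$ and $\delta(A_\varepsilon)$ is well defined.

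For the main case, suppose $\delta(A)>0$. Fix $\eta>0$ and choose $u,v\in A$ with $\|u-v\|>\delta(A)-\eta$; in particular $u\neq v$. Put
\[
u'=u+\varepsilon\,\frac{u-v}{\|u-v\|},\qquad v'=v+\varepsilon\,\frac{v-u}{\|u-v\|}.
\]
For any $x\in D$ we have $\|u-x\|\le\alpha$ (since $u\in A$), so $\|u'-x\|\le\|u'-u\|+\|u-x\|\le\varepsilon+\alpha$, which shows $u'\in A_\varepsilon$; the same argument gives $v'\in A_\varepsilon$. On the other hand $u'-v'=(u-v)\bigl(1+\tfrac{2\varepsilon}{\|u-v\|}\bigr)$, so $\|u'-v'\|=\|u-v\|+2\varepsilon>\delta(A)-\eta+2\varepsilon$. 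Hence $\delta(A_\varepsilon)\ge\|u'-v'\|>\delta(A)+2\varepsilon-\eta$, and letting $\eta\to0$ gives $\delta(A_\varepsilon)\ge\delta(A)+2\varepsilon$.

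It remains to treat the degenerate case $\delta(A)=0$, i.e.\ $A=\{p\}$ for a single point $p$ (here $A\neq\emptyset$ is used). Pick any unit vector $e\in X$ and set $u'=p+\varepsilon e$, $v'=p-\varepsilon e$. As before, for $x\in D$ we have $\|p-x\|\le\alpha$, hence $\|u'-x\|\le\varepsilon+\alpha$ and $\|v'-x\|\le\varepsilon+\alpha$, so $u',v'\in A_\varepsilon$; and $\|u'-v'\|=2\varepsilon=\delta(A)+2\varepsilon$. This gives the inequality in this case as well, completing the proof.

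\textbf{Main obstacle.} There is no serious difficulty here: the only points needing care are that the diameter of $A$ need not be attained (handled by the $\eta$‑approximation above) and the degenerate case $\delta(A)=0$ (handled separately, using only that $A$ is nonempty so that some point $p$ exists to perturb). Everything else is a one‑line application of the triangle inequality.
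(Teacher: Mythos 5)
Your proof is correct and follows essentially the same route as the paper: take an almost-diametral pair in $A$, push each point outward by $\varepsilon$ along the line through them, and use the triangle inequality to keep the new points in $A_\varepsilon$. You are in fact slightly more careful than the paper, which asserts $\delta(A)>0$ from $A\neq\emptyset$ and thus skips the singleton case that you treat explicitly.
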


\begin{proof} Let $\delta(A)=d \;  (d > 0\; \mathrm{by}\; A \neq \emptyset)$. Given $\sigma>0$, take $y_1$ and $y_2$ in $A$ such that   $||y_1-y_2|| \geq d-\sigma$. On the line joining $y_1$ and $y_2$, take $y_1'$ and $y_2'$ so that  $  ||y_1-y_1'||=\varepsilon= ||y_2-y_2'||, \;  ||y_1'-y_2'||=   ||y_1-y_2||+2 \varepsilon$.  Since $y_1 \in  A$, we have $y_1' \in A_{\varepsilon}$; similarly,  $y_2' \in A_{\varepsilon}$, so $\delta(A_{\varepsilon}) \geq d-\sigma+2 \, \varepsilon$. Since $\sigma>0 $ is arbitrary, then the thesis follows.
\end{proof}

\smallskip
\begin{remark}
 In the above lemma, we can have  $ \delta( A_ {\varepsilon} ) > \delta(A)+2 \varepsilon$.  For example, if we consider as $A$ the set denoted by $D'$ in Example 1 C)  and we take $\alpha=\varepsilon=1,$ then $ \delta(A)= \sqrt 3,\;   \delta( A_ {\varepsilon})= 3 \sqrt3$ .
\end{remark}

\medskip

\begin{teor}
Let $A, B$ be sets such that  $\delta(A) \neq \delta(B)$.  Then  $A^c \neq B^c$ and $ A' \neq B'$.

\end{teor}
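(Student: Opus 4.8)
The statement is that distinct diameters force distinct ball hulls and distinct ball intersections. The natural strategy is to prove the contrapositive: if $A^c = B^c$ (respectively $A' = B'$), then $\delta(A) = \delta(B)$. I would first recall the basic identities collected in the introduction, in particular $\delta(A^c) = \delta(A)$, so that control of the ball hull immediately gives control of the diameter.

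\textbf{Proof.} We argue by contraposition. Suppose first that $A^c = B^c$. By the properties recalled above, $\delta(A) = \delta(A^c) = \delta(B^c) = \delta(B)$, which is the desired conclusion. Hence $\delta(A) \neq \delta(B)$ forces $A^c \neq B^c$.

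Now suppose $A' = B'$; write $D := A' = B'$. Without loss of generality assume $\delta(A) \le \delta(B)$, and suppose toward a contradiction that $\delta(A) < \delta(B)$. On the one hand, from $A \subset A' = D$ we get $\delta(A') \ge \delta(A)$ trivially, but more is true: applying Lemma 1 (the ball-intersection dilation lemma) with the family $\{B(x,\delta(A)) : x \in A\}$, whose intersection is exactly $A'$, and with $\varepsilon = \delta(B) - \delta(A) > 0$, one sees that enlarging all radii from $\delta(A)$ to $\delta(B)$ enlarges the diameter of the resulting intersection by at least $2\varepsilon$; combined with the fact that $\delta(A') \le 2\delta(A)$ from $(3)$, this will pin down $\delta(A')$ in terms of $\delta(A)$. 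On the other hand, the same reasoning applied to $B$ gives a different pinning of $\delta(B') = \delta(A')$ in terms of $\delta(B)$. Comparing the two forces $\delta(A) = \delta(B)$, a contradiction.

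\textbf{The main obstacle.} The delicate point is the $A'$ case: unlike the ball hull, the ball intersection $A'$ does \emph{not} in general have the same diameter as $A$ (Example 1 shows $\delta(A')$ can be as large as $4\,r(A)$), so one cannot simply read off $\delta(A)$ from $A'$. The real content is therefore a monotonicity-with-a-gap argument: the map $\delta \mapsto \delta\big(\bigcap_{x\in A}B(x,\delta)\big)$ is strictly increasing with a quantitative jump, by Lemma 1, so the sets $\bigcap_{x\in A}B(x,\delta(A))$ and $\bigcap_{x\in A}B(x,\delta(B))$ cannot coincide when $\delta(A) \neq \delta(B)$. The care needed is to make sure the two chains $A \subset A' $ and $B \subset B'$ are compared correctly — the cleanest route is probably to fix the common set $D = A' = B'$ and observe that $D = \bigcap_{x\in A}B(x,\delta(A))$ and simultaneously $D \supseteq \bigcap_{x\in D}B(x,\delta(A)) \supseteq \bigcap_{x\in D}B(x,\delta(B)) $-type inclusions, then invoke Lemma 1 to derive $\delta(D) \ge \delta(D) + 2\varepsilon$, the sought contradiction.
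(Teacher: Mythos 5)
Your first part (the $A^c$ case) is correct and coincides with the paper's argument: $\delta(A^c)=\delta(A)$ immediately gives the contrapositive. The second part, however, has a genuine gap. Your sketch applies Lemma 1 to the family $\{B(x,\delta(A)):x\in A\}$ and claims this ``pins down'' $\delta(A')$ in terms of $\delta(A)$; it does not --- Lemma 1 only gives a one-sided inequality, and the enlarged intersection $\bigcap_{x\in A}B(x,\delta(B))$ is \emph{not} $B'$ (whose centers run over $B$, not over $A$), so nothing comparable with $\delta(B')=\delta(A')$ comes out of that step. Your ``cleanest route'' also has the inclusion backwards: if $\delta(A)<\delta(B)$ then $\bigcap_{x\in D}B(x,\delta(A))\subseteq\bigcap_{x\in D}B(x,\delta(B))$, and the contradiction you announce, $\delta(D)\ge\delta(D)+2\varepsilon$, does not follow from the steps you list (nor is it what a correct argument produces).

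The missing idea is property $(P_1)$: $A^c=\bigcap_{x\in A'}B(x,\delta(A))$ and $B^c=\bigcap_{x\in B'}B(x,\delta(B))$. Writing $D:=A'=B'$, $d:=\delta(A)$ and $d+\varepsilon:=\delta(B)$, these are intersections of balls centered at the \emph{same} set $D$ with radii $d$ and $d+\varepsilon$, so Lemma 1 yields $\delta(B^c)\ge\delta(A^c)+2\varepsilon$. The decisive point is that $(P_1)$ identifies the two intersections as ball hulls, whose diameters convert back to those of $A$ and $B$: $\delta(A^c)=d$ and $\delta(B^c)=d+\varepsilon$, hence $d+\varepsilon\ge d+2\varepsilon$, forcing $\varepsilon=0$. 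This is exactly the paper's proof; your sketch circles around it (same Lemma 1, same common set $D$) but never invokes $(P_1)$, which is the step that turns the dilation inequality into a statement about $\delta(A)$ and $\delta(B)$, where the contradiction actually lives.
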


\begin{proof}
If $\delta(A) \neq \delta(B)$, then $\delta(A^c) \neq \delta(B^c)$, so  $A^c \neq B^c$.

\smallskip
For the second part,  assume  (by contradiction) that   $A'=B'$. For example,  let  $\delta(A)=d, \; \delta(B)=d+\varepsilon, \;  \varepsilon \geq 0$;  then (see ($P_1$))

\smallskip

$A^c= \bigcap  \{B(x, d)): {x\in A'}  \}$

\noindent and

$B^c= \bigcap  \{B(x, d+\varepsilon): {x\in A'}  \}$.

\smallskip
Therefore, according to Lemma 1, we have   $ d+ \varepsilon = \delta( B) = \delta( B^c) \geq \delta( A^c) + 2\varepsilon $. This proves that $\varepsilon=0$, so  \, $\delta( B) = \delta( B^c) =\delta( A^c) = \delta( A)$.
\end{proof}

\medskip
It is clear that, in general, the inclusion $A\subset B$ together with the equality $\delta(A)=\delta(B)$ implies

\smallskip
$A^c\subset B^c; \; B'\subset A'$.

\smallskip Moreover, the inequality $\delta(B')<\delta(A')$ is possible: let $A$ be the set $D$ in Example 1 C) and $B=B(O, 1)$.

\medskip
Under the same assumptions on $A$ and $B$,  if $A$ has a unique completion (this means $A^c=A'$), then $B$ has the same unique completion since $A^c \subset B^c \subset B' \subset A'$. So we have

\medskip
\noindent $(P_4)$ If $  \; A\subset B, \; \delta(A)=\delta(B)$, and $  A$ has a  unique completion $A'$, then $A'$ is also the unique

 \quad completion of $B$.

\medskip
But it is possible that $B$ has a unique completion (in particular, $B$ is complete), and $A \subset B$ has more completions.

\smallskip
Also we can have  $\delta(A)<\delta(B)$ and   $\delta(A')=\delta(B')$.

\smallskip
In general, the inclusion $A\subset B$
does not imply that  any inclusion between $A^c$ and $B^c$ or between $A'$ and $B'$ holds  (unless $A$ and $B$ have a unique completion). We give an example showing this.

\smallskip
\begin{example}
Let $X$ be the Euclidean plane.

\smallskip
A) Let  $A$ be the equilateral triangle determined by the points  $(-1, 0),  (1, 0),  (0, \sqrt3) $,  and  $ B$ be  the ball $B\big((0, a), \sqrt  {a^2+1 }\, \big), \, a \geq 1/ \sqrt3$. The boundary of $B$ contains the first two vertices of $A$, and we have:  $A \subset B$, $\delta(A)<\delta(B)$,  and $B^c=B $ is not contained in $A^c$. Moreover, for $a> \sqrt3$ the unique completion of $A$ (a Reuleaux triangle $T=A^c=A'$) is not contained in  $B=B^c=B'$.

\smallskip
B) Let  $A=\{(x,0): \, |x|\leq 1/2 \}$  and $B=B(0, 1/2+ \varepsilon)$ with $\varepsilon>0$ "small". Then there is no inclusion between $B'=B$ \, and  $A'$.

\end{example}

\smallskip We prove another result. For two different sets $A$ and $B$ it is possible to have $A^c=A'=B'=B^c$ (for example, let $B$ be  the unique completion of an incomplete set $A$). In any case, the following fact is true.

\begin{prop}
Given two sets  $A $ and $ B$,  we have

\smallskip
$A^c = B^c  \Leftrightarrow A'=B'$.

 \end{prop}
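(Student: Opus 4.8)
The plan is to prove the two implications separately, using the two characterizations of the ball hull that the paper has already recorded: the original definition via balls circumscribing $D$, property $(P_1)$ which says $D^c=\bigcap_{x\in D'}B(x,\delta(D))$, and the identity $(D^c)'=D'$ together with $\delta(D^c)=\delta(D)$. The guiding observation is that if $A^c=B^c$ then automatically $\delta(A)=\delta(A^c)=\delta(B^c)=\delta(B)$, so there is no diameter mismatch to worry about in that direction; conversely, if $A'=B'$ then $\delta(A)=\delta(B)$ follows from Theorem~2 (the contrapositive of ``$\delta(A)\neq\delta(B)\Rightarrow A'\neq B'$''). So in both directions we may freely set $d:=\delta(A)=\delta(B)$.

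For the implication $A^c=B^c\Rightarrow A'=B'$: from $(D^c)'=D'$ applied to both sets we get $A'=(A^c)'=(B^c)'=B'$, and this single line finishes that direction. This is the easy half and uses only the quoted identity from \cite{BaP}.

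For the converse $A'=B'\Rightarrow A^c=B^c$: having established $\delta(A)=\delta(B)=d$, apply property $(P_1)$ to each set. It gives $A^c=\bigcap_{x\in A'}B(x,d)$ and $B^c=\bigcap_{x\in B'}B(x,d)$. Since $A'=B'$ and the common radius is $d$, the two intersections are literally the same set, so $A^c=B^c$. The only subtle point is that $(P_1)$ is stated with the radius $\delta(D)$ tied to the particular set $D$; the crucial step is therefore the preliminary reduction $\delta(A)=\delta(B)$, which is exactly what makes the two ball-intersection formulas use the same radius. I expect this reduction — invoking Theorem~2 to rule out $\delta(A)\neq\delta(B)$ — to be the main (and essentially only) obstacle; once the diameters are known equal, both implications are one-liners from the recorded properties.

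Concretely the write-up would be: ``If $A^c=B^c$, then $A'=(A^c)'=(B^c)'=B'$. Conversely, suppose $A'=B'$. By Theorem~2, $\delta(A)=\delta(B)=:d$, for otherwise $A'\neq B'$. Then by $(P_1)$, $A^c=\bigcap_{x\in A'}B(x,d)=\bigcap_{x\in B'}B(x,d)=B^c$.'' No additional lemmas beyond what the excerpt provides are needed.
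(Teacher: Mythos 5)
Your proof is correct and matches the paper's argument exactly: the forward direction via $(D^c)'=D'$, and the converse by first invoking Theorem 2 to get $\delta(A)=\delta(B)$ and then applying $(P_1)$ to identify both ball hulls with the same intersection $\bigcap_{x\in A'}B(x,\delta)$. Nothing to add.
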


\begin{proof}
Let $A^c=B^c$. Then $A'=(A^c)'=(B^c)'=B'$.

\smallskip
Conversely, let $A'=B'$. Then, according to Theorem 2, we have  $\delta(A)=\delta(B)=\delta$. So, by using  $(P_1)$, we obtain:  $A^c= \bigcap_ {x\in A'}  B(x, \delta) =  \bigcap_ {x\in B'}  B(x, \delta) =B^c $.
 \end{proof}

\medskip

\section {Completion of sets, radii,  and centers}

\medskip
Some of the results in this section have been indicated, for finite dimensional spaces,  in  \cite{MMS}.

\medskip
 Every center of $D$ is contained in $D'$ (see  \cite [Theorem 3.3]{BaP}),  but not necessarily in $D^c$, also when there is a unique center for $D$. In fact: for the set $D$ in Example 1 B),  $(1,1,1) \notin B(O, \, 2)$, where $2=\delta(D)$, and  $D \subset B(O, \, 2)$;  so also $D^c \subset   B(O, \, 2)$.

\medskip
It is clear that in general $D^c$  does not contain a completion of $D$.

\medskip
Let $D \subset B(x, \delta(D))=B_x$. If $x \in D$, then $D' \subset B_x$, and  so all completions of $D$ are contained in $B_x$. But otherwise, in general, this is not true. For example, consider the set  $D$ in Example 1 C) \, ($\delta(D)=1$) and  $x=(1/2, 1/2)$.

\medskip

According to $(P_3$),  $r(D)=r(D^c)$ (see \cite [Corollary 3.6]{BaP}) and

\smallskip

  $\;\; r(D)=\inf \{r(A): A$ is a completion of $D$\}.

\smallskip
Clearly,   $r(D)$ is also a minimum in the above formula when $D$ is complete, but also in the following cases:

\smallskip
 - $D$ has a unique completion $D^c$;

\smallskip
 - $D$ has a center (see the discussion above).

\smallskip
But in some cases the infimum is not a minimum; see  \cite [Example 3.3] {BaP}. Also (and as already said) there exist complete sets without centers (see \cite [Example 1] {CP}).

\medskip
Clearly, in general $r(D') \geq r(D)$ (inequality is strict in Example 1 C)). According to (3), $r(D')\leq 2 r(D)$, and this estimate is sharp (see Example 1 A)).

\smallskip
Note that a set can have a unique center and different completions (see Example 1 C)).

\smallskip
In general, different completions of a given set can have different radii; also, we can have different completions with the same radius. The range of  \, $r(D_m)$, $D_m$ denoting completions of $D$, can be the whole interval $[r(D), 2 r(D)]$. To see this, look at Example 3, with $t=0$. In fact, we have:  $D$ has completions with the same radius $r=r(D)$ \, ($\alpha=1)$ and completions with radius $r>r(D)$ ($r=1$ for $\alpha=0$).

The same example, with $\alpha=0$, also shows that  complete sets can have more centers.

\medskip
Our next result extends \cite [Corollary 3]  {MMS}.

\begin{teor}
The sets $D$ and $D^c$ have the same centers and the same completions.

\end{teor}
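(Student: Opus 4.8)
The plan is to prove the two assertions separately, using the structural facts about $D^c$ collected in the Introduction.

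First, for the centers: recall that $\delta(D^c)=\delta(D)$ and $D\subset D^c$, so for any $x\in X$ we have $r(D,x)\le r(D^c,x)$ trivially, while the reverse inequality $r(D^c,x)\le r(D,x)$ is exactly the statement that $D^c\subset B(x,r(D,x))$. The key observation is $(P_1)$ together with $(P_2)$: if $r(D,x)\le\delta(D)$ then $x\in D'$, and by $(P_1)$ we get $D^c=\bigcap_{z\in D'}B(z,\delta(D))\subset B(x,\delta(D))$ — but this only gives the bound $\delta(D)$, not $r(D,x)$. The cleaner route is to invoke $(P_3)$ (the refinement of \cite[Theorem 3.5]{BaP} quoted above): if $D\subset B(x,r)$ with $r\le\delta(D)$, then $D^c\subset B(x,r)$. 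So whenever $r(D,x)\le\delta(D)$ we get $r(D^c,x)=r(D,x)$; and whenever $r(D,x)>\delta(D)$, since $r(D^c,x)\le r(D^c)+\delta(D^c)\le\ldots$ — actually more simply, $r(D^c,x)\le\operatorname{diam}$-type bounds are not needed: if $r(D,x)>\delta(D)$ then $x\notin D'$, hence $x$ is not a center of $D$ (centers lie in $D'$), and also not a center of $D^c$ since $(D^c)'=D'$. Thus the function $r(\cdot,x)$ agrees on $D$ and $D^c$ at every point that could possibly be a center, and $r(D)=r(D^c)$ follows (this last equality is already quoted from \cite[Corollary 3.6]{BaP}); hence the center sets coincide.

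Second, for the completions: I would show directly that the two families of completions are equal as sets. If $A$ is a completion of $D$, then $A$ is complete, $\delta(A)=\delta(D)=\delta(D^c)$, and $A\supset D$; I must show $A\supset D^c$. But by $(1)$, $D^c=\bigcap\{A':A'$ is a completion of $D\}$, so in particular $D^c$ is the intersection over all completions, and I need rather the opposite containment — that is, every completion of $D$ contains $D^c$. This is exactly what $(1)$ says read correctly: $D^c$ is contained in the intersection of all completions of $D$, hence in each one. So $D^c\subset A$, $A\supset D^c$, $\delta(A)=\delta(D^c)$, $A$ complete $\Rightarrow$ $A$ is a completion of $D^c$. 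Conversely, if $A$ is a completion of $D^c$, then $A\supset D^c\supset D$, $\delta(A)=\delta(D^c)=\delta(D)$, and $A$ is complete, so $A$ is a completion of $D$. Hence the two families coincide.

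The main obstacle is the first half, and specifically handling centers $x$ with $r(D,x)>\delta(D)$ and the case where no center exists. I expect the cleanest argument to be: a center of $D$ (if any) lies in $D'$ by \cite[Theorem 3.3]{BaP}, and for $x\in D'$ one has $r(D,x)\le\delta(D)$ by $(P_2)$, so $(P_3)$ applies and gives $r(D^c,x)=r(D,x)$; since also $r(D)=r(D^c)$ and $(D^c)'=D'$, the two notions of ``center'' pick out the same points, and if one space has no center neither does the other. I should double-check that $(P_3)$ is being quoted in the form ``$D\subset B(x,r)$, $r\le\delta(D)$ $\Rightarrow$ $D^c\subset B(x,r)$'', which is precisely the strengthened statement emphasised in the remark following $(P_3)$; granting that, the argument is routine. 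One should also remark that the equality of completions gives a second, independent proof that $D$ and $D^c$ have the same radius-minimizers among completions, consistent with the displayed formula $r(D)=\inf\{r(A):A$ a completion of $D\}$.
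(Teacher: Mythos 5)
Your proof is correct and follows essentially the same route as the paper: the completions part is identical (every completion of $D$ contains $D^c$ by (1), and conversely), and the centers part rests on the same two facts, $r(D)=r(D^c)$ and $(P_3)$. The only difference is cosmetic: your detour through $D'$ and $(P_2)$ is unnecessary, since for a center $c$ one has $D\subset B(c,r(D))$ with $r(D)\leq\delta(D)$ automatically, so $(P_3)$ applies directly, which is exactly how the paper argues.
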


\begin{proof}
Recall that  $D$ and $D^c$ have the same diameter and the  same radius.

Let $r(D^c)=r(D)=r$. If $c$ is a center of $D^c \;  (\mathrm{i.e.},\; D^c \subset B(c,r))$, then it is also a center of $D$. Conversely, if  $c$ is a center of $D \;  (\mathrm{i.e.},\; D \subset B(c,r))$, then by $(P_3)$ $D^c \subset B(c,r)$, and so $c$ is also a center of $D^c$.

For the second part: a completion of $D$ must contain $D^c$ (see (1)), so it is also a completion of $D^c$. The converse is clear.
\end{proof}

Now we compare the completions of $D'$ with those of $D$.

\smallskip
If  $D$ has more completions, then $\delta(D')>\delta(D)$, so   $D$ and $D'$ have different completions. Moreover (see \cite [Theorem 3.8]{BaP}),  we have $ \delta\big((D')'\big)>\delta(D'))$, so also $D'$ has more completions.

\smallskip
 Let $D$ have a unique completion $D'$. Then $r(D)=r(D')=r(D',D')$. But we can have $r(D,D)>r(D)$; see the next example. Note that if, moreover, $D$ has no center, then (since $r(D)=r(D'))$ the same is true for all sets $A$ satisfying  $D \subset A \subset D'$.

 \smallskip
\begin {example}
Let $D=\{f \in C[0,1]: f(0)=0; \, 0 \leq f(x) \leq 1\}$.  Its unique completion is $D'=B(\bar{f}, 1/2)$, where $\bar{f}$ is the constant function 1/2 (we have $r(D)=r(D')=r(D', D')=1/2; \;\, r(D,D)=1=\delta(D)=\delta(D')$).
\end {example}

We noticed that, in case of a unique completion, $r(D)=r(D^c)=r(D')$. The converse is
 not true. Namely, the condition $r(D)=r(D')$ does not imply $\delta(D)=\delta(D')$, as our next example shows (see \cite [Theorem 3.7, $(k)\nRightarrow (h)$]{BaP}).

\medskip
\begin {example} Consider the space  $c_o$ of all sequences converging to 0, with the max norm. Let $D=\{x=(x_1, x_2, ... ,x_n, ...) :\,  x_1=0; \; 0 \leq x_i \leq 1$ for$ \; i \geq 2\} \; (\delta(D)=1)$. The set $D$ has different completions ($\delta(D')=2)$; but we have  $r(D)=r(D')=r(D,D)=1.$

 \end{example}

\medskip
Let $A \subset B$ and $\delta(A)=\delta(B)$. Then $A^c \subset B^c$,  and so $r(A^c)\leq r(B^c)$. In general, the last inequality is not  an equality. For example, consider  in the Euclidean plane  $A=\{(x,0): \, |x|\leq 1/2 \}$ and $B$ an equilateral triangle based on $A$.

\noindent Also, under the same assumptions  we have  $A' \supset B'$. So $r(A')\geq r(B')$, and in general the last inequality is not  an equality, as the same example above shows.

\smallskip
If $A\subset B$ and $\delta(A) < \delta (B)$, then,  concerning $r(A')$ and $r(B')$,  all cases are possible ($<, \, =, \, >$). In fact, the possibility $r(A') < r(B')$ is trivial; equality  is possible according to Example 6 above (take $A=D, \, B=D'$). Concerning $r(A') > r(B')$, see Example 4 B).

\bigskip
We present a result indicated in \cite{BaP} (see Remark to Corollary 3.4 there) and an immediate consequence of it. We limit ourselves to the consideration of  sets where centers exist, but the general case could be treated in a similar way, giving estimates for the set of approximate centers of $D$.

\smallskip
For a set $D$, we denote by  $C_D$ the set of its centers.

\begin{prop}

For every set $D$ the following inequality is true:

\smallskip
$\delta(C_D) \leq 2\,r(D)-2\,\delta(D)+\delta(D')$.

\smallskip
In particular, if $D$ has a unique completion, then

\smallskip
$\delta(C_D) \leq 2\,r(D)-\delta(D)  \leq \delta(D)$.
\end{prop}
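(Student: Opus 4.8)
The plan is to bound $\delta(C_D)$ by placing every center of $D$ inside a suitable ball intersection and then applying Lemma 1. Write $r = r(D)$ and $\delta = \delta(D)$. If $c$ is any center of $D$, then $D \subset B(c, r)$, and since $r \le \delta$ (recall $r(D) \le \delta(D)$), property $(P_3)$ gives $D^c \subset B(c, r)$; equivalently, by $(P_1)$ applied with the radius $r$, every $x \in D'$ satisfies $\|x - c\| \le$ ... but more to the point, each center $c$ lies in $\bigcap_{x \in D^c} B(x, r)$. Actually the cleaner route: since $D^c \subset B(c,r)$ for each center $c$, we get $c \in \bigcap_{x \in D^c} B(x, r)$. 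Now I would also use that $D^c \subset B(c, r)$ together with $\delta(D^c) = \delta$ to see that $\bigcap_{x \in D^c} B(x,r)$ compares to $\bigcap_{x \in D^c} B(x, \delta) = (D^c)' = D'$ via Lemma 1 with $\alpha = r$ and $\varepsilon = \delta - r$.

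The key computation is then: set $A = \bigcap_{x \in D^c} B(x, r)$ and $A_\varepsilon = \bigcap_{x \in D^c} B(x, r + \varepsilon)$ with $\varepsilon = \delta - r \ge 0$. By Lemma 1, $\delta(A_\varepsilon) \ge \delta(A) + 2\varepsilon = \delta(A) + 2\delta - 2r$. But $A_\varepsilon = \bigcap_{x \in D^c} B(x, \delta) = (D^c)' = D'$, so $\delta(D') \ge \delta(A) + 2\delta - 2r$, i.e. $\delta(A) \le 2r - 2\delta + \delta(D')$. Since every center of $D$ lies in $A$, we get $C_D \subset A$, hence $\delta(C_D) \le \delta(A) \le 2r - 2\delta + \delta(D')$, which is the first inequality. (One must check the trivial degenerate possibilities: if $C_D$ has at most one point there is nothing to prove, and Lemma 1 requires $A \ne \emptyset$, which holds as soon as a center exists.)

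For the "in particular" clause, suppose $D$ has a unique completion, so $\delta(D') = \delta(D) = \delta$. Substituting into the first inequality yields $\delta(C_D) \le 2r - 2\delta + \delta = 2r - \delta$. The final bound $2r - \delta \le \delta$ is just the standing inequality $\delta(D) \le 2r(D)$ rearranged. So the chain $\delta(C_D) \le 2r(D) - \delta(D) \le \delta(D)$ follows.

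The main obstacle I anticipate is making sure the ball-intersection bookkeeping is exactly right: one needs $A_\varepsilon$ to be literally $D'$, which is why it is important to take the intersection over $x \in D^c$ (using $(P_1)$: $D' = (D^c)' = \bigcap_{x \in D^c} B(x, \delta)$) rather than over $x \in D$, and to know $D^c \subset B(c,r)$ for centers $c$, which is exactly the content of $(P_3)$ together with $r(D) \le \delta(D)$. Once the identification $A_\varepsilon = D'$ is nailed down and Lemma 1 is invoked with $\alpha = r$, $\varepsilon = \delta - r$, everything else is arithmetic. A minor point to handle cleanly is the case $\varepsilon = 0$ (i.e. $r(D) = \delta(D)$), where Lemma 1 is vacuous but the claimed inequality $\delta(C_D) \le \delta(D')$ still holds trivially since $C_D \subset A \subset D'$.
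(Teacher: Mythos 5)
Your argument is correct, and it is worth noting that the paper itself gives no proof of this proposition at all: it is presented as ``a result indicated in [BaP] (see Remark to Corollary 3.4 there)'' together with its immediate consequence. So your proposal supplies an independent, self-contained derivation from Lemma 1, and the mechanism is sound: every center $c$ satisfies $D\subset B(c,r)$ with $r=r(D)\leq\delta(D)$, so by $(P_3)$ also $D^c\subset B(c,r)$, hence $C_D\subset A=\bigcap_{x\in D^c}B(x,r)$; Lemma 1 with $\alpha=r$, $\varepsilon=\delta(D)-r$ and the identification $A_\varepsilon=\bigcap_{x\in D^c}B(x,\delta(D))=(D^c)'=D'$ then give $\delta(C_D)\leq\delta(A)\leq 2r(D)-2\delta(D)+\delta(D')$, and the case $r(D)=\delta(D)$ is handled as you say. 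One simplification: the detour through $D^c$ and $(P_3)$ is unnecessary. Since $D\subset B(c,r)$ already means $c\in\bigcap_{x\in D}B(x,r)$, you can apply Lemma 1 with the index set $D$ itself, where $\bigcap_{x\in D}B(x,\delta(D))=D'$ holds by definition; this avoids both $(P_3)$ and the fact $(D^c)'=D'$ (which, incidentally, comes from the quoted results of [BaP] in the Introduction rather than from $(P_1)$ as you cite).

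Two small corrections. First, in the ``in particular'' clause the inequality $2r(D)-\delta(D)\leq\delta(D)$ is equivalent to $r(D)\leq\delta(D)$, which is part of the standing chain $r(D)\leq r(D,D)\leq\delta(D)\leq 2r(D)$; it is not ``$\delta(D)\leq 2r(D)$ rearranged'' --- that inequality rearranges to $2r(D)-\delta(D)\geq 0$ and goes the wrong way. Second, in the degenerate case where no center exists (or $C_D$ is a singleton) you should note, as the paper implicitly does by restricting to sets admitting centers, that the right-hand side is nonnegative (indeed $2r(D)-2\delta(D)+\delta(D')\geq 2r(D)-\delta(D)\geq 0$), so the inequality holds trivially; with these touches the proof is complete.
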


\begin{remark}
 The inequality $\delta(C_D) \leq 2\,r(D)-\delta(D)$ is not true in general. In fact, in Example 6 we have $\delta(D)=r(D)=1$, $\delta(C_D)=\delta(D')=2$.
 The first inequality is sharp; see Example 1 A).

\noindent  Concerning the second statement in Proposition 4, the first  inequality  is sharp; for example, it becomes an equality for a ball. Of course,  also the second is sharp; see Example 6.

The second part also implies the following fact: if $\delta(C_D)>\delta(D)$, then $D$ has more completions.
\end{remark}

\medskip

\section {Completions and Minimality}

The sets we are considering will always be assumed to be bounded, closed and convex with diameter $>0$.

\smallskip
Let $C$ be a complete set of diameter $d$.

Consider a set $D \subset C$ with   $\delta(D)=d$ (in particular,  $D=C$). Then set

\smallskip
$T(D,C)=\{A \subset D: \; C $ is a completion of $A \}$.

\smallskip
\noindent Also, let $C$  be  the unique completion of $D   \; (D^c=D'=C)$; set

\smallskip
$U(D,C)=\{A\subset D: \; C $ is the unique completion of $A \} = \{A\subset D: \, A^c=A'=C \} $.

\smallskip
We shall discuss minimality for completions, or for unique completions.

\smallskip
If \,  T(D,C) (resp. U(D,C)) is  the singleton $\{D \}$, then we say that $D$ is (mC) (resp.:  (muC)). Otherwise, if $D$ is  not (mC), or not  (muC), then  an application of Zorn's lemma (every chain has a lower bound) shows that $D$ contains minimal subsets with that property.   In other words,  there are subsets of $D$ which are minimal  in the sense of inclusion, that still have $C$ as completion (resp.: as unique completion).

\smallskip
Note that ''$D$ is (mC)`` means the following: $D \subset C, \;  \delta(D)=d, \;  \delta(S)< d$ for every proper subset $S$ of $D$.
So the condition ''$D$ is (mC)`` is equivalent to:  the maximal width of $S$ is smaller than $d$  for every proper subset $S$ of $D$ (see \cite [Proposition 4]{CP'}). Apparently, a related notion (less tractable) is the following:  the minimal width of a convex body $D$ is smaller for every proper subset $S$ of $D$ (i.e., $D$ is \emph{reduced}; see, for example, \cite  {LM}).  Reducedness does not imply (mC) (look at the equilateral triangle in the Euclidean plane);  a segment is (mC) but it is not reduced (it is not a body).

\medskip
We shall discuss the following questions.

\smallskip
(Q1) \, Which sets  $D$ are  (mC)?

(Q2) \, Which sets $D$ are (muC)?

\smallskip
The answer to (Q1), at least in finite dimensional spaces, is trivial. The answer to (Q2) seems to be difficult.

\smallskip
\begin {prop} If $X$ is a finite dimensional normed  space, then, given a complete set $C$ of diameter $d$, $D \subset C$ is {\rm (mC)} if and only if it is a segment of diameter (length) $d$ contained in $C$.

\end {prop}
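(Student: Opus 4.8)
The plan is to prove both implications. The easy direction is that a segment $D$ of length $d$ contained in a complete set $C$ is always (mC): any proper closed convex subset $S$ of a segment of length $d$ is a (possibly degenerate) subsegment of length $<d$, so $\delta(S)<d$, hence $T(D,C)=\{D\}$ by definition. (Here one should note that in the definition of $T(D,C)$ we only consider $A$ with $\delta(A)=d$, so automatically every proper subset of a (mC) set has smaller diameter, which is exactly the reformulation recalled before the statement.)

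For the nontrivial direction I would assume $D\subset C$ is (mC) and show $D$ must be a segment of length $d$. First, since $\delta(D)=d$ and $X$ is finite dimensional, the diameter is attained: there exist $p,q\in D$ with $\|p-q\|=d$. Then the segment $[p,q]$ is a subset of $D$ (by convexity) with $\delta([p,q])=d$, and $C$ is still a completion of $[p,q]$: indeed $[p,q]\subset D\subset C$, $\delta([p,q])=d=\delta(C)$, and $C$ is complete, so by definition $C$ is a completion of $[p,q]$, i.e. $[p,q]\in T(D,C)$. Since $D$ is (mC), $T(D,C)=\{D\}$, forcing $D=[p,q]$, a segment of length $d$. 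This shows the minimal (mC) sets are exactly the diametral segments of $C$, and conversely every such segment lies in $T(D,C)$ for any $D$ containing it.

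The one genuine point requiring care is the attainment of the diameter: this is where finite dimensionality enters, via compactness of the bounded closed set $D$ (together with continuity of the norm on $D\times D$). In infinite dimensions the diameter of a bounded closed convex set need not be attained, which is precisely why the statement is restricted to finite dimensional $X$; the authors' Examples 3 and 6 in $C[-1,1]$ and $c_0$ illustrate the kind of pathology that blocks the argument there. No other step poses a difficulty: once a diametral pair $p,q$ is available, the rest is the definitional observation that $[p,q]$ competes in $T(D,C)$ and the minimality of $D$ collapses $D$ onto it.

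Thus the proof reduces to: (i) attainment of $\delta(D)$ by a pair $p,q\in D$ (compactness), (ii) $[p,q]\subset D$ with the same diameter and the same completion $C$, (iii) minimality forces $D=[p,q]$, and (iv) the trivial converse that segments of length $d$ inside $C$ are (mC). I expect step (i) — the use of finite dimensionality — to be the only place where an appeal beyond pure formal manipulation is needed.
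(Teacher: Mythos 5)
Your proof is correct and takes essentially the same route as the paper: finite dimensionality gives a diametral pair $p,q\in D$, the segment $[p,q]$ lies in $T(D,C)$, so minimality forces $D=[p,q]$, while the converse is the observation that every proper (closed convex) subset of a length-$d$ segment has diameter $<d$. Only your aside is slightly off: the infinite-dimensional failure comes from sets without diametral pairs (the Vesel\'y construction \cite{Ve} invoked in the paper's Remark 4), not from Examples 3 and 6, where the diameter is in fact attained.
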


\begin {proof}   If   $D$ is a segment of diameter (length) $d$, then any proper subset of $D$ has diameter $<d$, so $C$ is not a completion of it.

\smallskip Assume that $D$  is (mC)  \,  (i.e., $\delta(D)=d$). Take a diametral pair $x, y$ of $D$. Then the segment $[x,y]$ is (mC). If  $D$ is not a segment, then  $[x,y]$ is a proper subset of $D$, so $D$  cannot be (mC).
\end {proof}

\smallskip
\begin {remark}
 Proposition 5 is not true if $X$ is infinite dimensional. In fact, in this case we can consider a set $D$ without a diametral pair (see \cite  {Ve}); this means that if $\delta(D)=d$, then for every pair $x, y \in D$   we have $||x-y||<d$. Now let $C$ be a completion of $D$.  There is a minimal subset $A$ of $D$ having $C$ as a completion. But $A$ cannot be a segment, since every segment  contained in $D$ has diameter (length) $<d$.

\end {remark}

\bigskip
Note that a segment can have a unique completion or more completions, as the next example shows.

\begin {example}
Let $X$ be the plane with the max norm.  Let $C=\{(x,y): \,  0 \leq x \leq 1;  \, 0 \leq y \leq 1 \}, \,  D_1=\{(x,y):  \, 0 \leq x \leq 1; \, y=0 \}, \,   D_2=\{(x,y): \,  0 \leq x \leq 1; \, x=y \}, \,  D_3=\{(x,y): \,  0 \leq x \leq 1; \,  y=1-x  \}.$  Then $D_1$ has more completions, while $D_2$ and $D_3$ have $C$ as unique completion.

\end {example}

\smallskip
 Now we discuss ($Q_2$). If $D$ has the unique completion $C$, then $\delta(D)=d= \delta(C)$, and moreover:  $D^c=D'=C$.  This means that (see $(P_1)$):

\smallskip
$D^c= \bigcap_ {x\in C}  B(x, d) =   \bigcap_ {x\in D}  B(x, d)=D',$

\smallskip
while if $S$ is a proper subset of $D$, of diameter $d$, then $\delta(S') = \delta \big(\bigcap_ {x\in S}  B(x, d ) \big) >d. $

\smallskip
Note that in this case, since $D^c=D'=C$, then $S^c$ should be strictly contained in $C$   (according to  Proposition 3).

\medskip
Segments are (muC) only if they have a unique completion $C$ (see Example 7).  We now give an example of a (muC) set which is not a segment.

\begin {example} Let $X$ be the plane with the max norm.  Let $C=\{(x,y): \,  0 \leq x \leq 1;\,  0 \leq y \leq 1 \}, \, \,   D_4=\{(x,y):  0 \leq x \leq 1; \,  x/2 \leq y \leq 1-x/2  \}$. Then $  D_4$ has $C$ as unique completion,  while every proper subset of $D_4$ has more completions.  The same is true if we consider an equilateral triangle in the Euclidean norm.

\end {example}

\smallskip
Minimal elements are not unique in general.  In Example 7, both $D_2$ and $D_3$ are (muC) (for $C$ as defined there); in that example,  $C$ is the unique set with diameter 1, containing both $D_2$ and $D_3$.

\smallskip
Note that if $M_1$ and $M_2$ are two different sets being (muC), then it is clear that $\delta(M_1 \cap M_2)<\delta(C)$.

\smallskip
In Example 8, $D_4$ is not complete (its unique completion is $C$);  each segment in the boundary of  $D_4$ is (mC).

\smallskip

\normalsize

\medskip

\small
Authors' addresses:

\smallskip
Horst Martini,   Fakult\"{a}t f\"{u}r Mathematik, TU Chemnitz, D-09107 Chemnitz, Germany. e-mail:  horst.martini@mathematik.tu-chemnitz.de

Pier Luigi Papini, Via Martucci 19, 40136 Bologna, Italy. e-mail: plpapini@libero.it

Margarita Spirova, Fakult\"{a}t f\"{u}r Mathematik, TU Chemnitz, D-09107 Chemnitz, Germany. e-mail: margarita.spirova@mathematik.tu-chemnitz.de

\bigskip
\bigskip
\end  {document}